\theoremstyle{plain} 
\newtheorem{pro}{Proposition}
\newtheorem{thm}{Theorem}
\theoremstyle{definition}
\theoremstyle{remark}
\numberwithin{equation}{section}
\DeclareMathSymbol{\R}{\mathalpha}{AMSb}{"52}
\DeclareMathSymbol{\C}{\mathalpha}{AMSb}{"43}
\newcommand{\mbb}[1]{\mathbb{#1}}
\newcommand{\Z}{\mbb{Z}}
\newcommand{\comment}[1]{}
\newcommand{\bd}{\begin{description}}
\newcommand{\ed}{\end{description}}
\newcommand{\beqr}{\begin{eqnarray}}
\newcommand{\eeqr}{\end{eqnarray}}
\newcommand{\beqt}{\begin{equation}}
\newcommand{\eeqt}{\end{equation}}
\begin{document}

\title[]{A characterization of  iterative equations by their coefficients}

\author[JC Ndogmo]{J.C. Ndogmo }
\address[JC Ndogmo]{School of Mathematics\\
University of the
Witwatersrand\\
Private Bag 3, Wits 2050\\
South Africa}
\email{jean-claude.ndogmo@wits.ac.za}

\author[FM Fazal]{F.M. Mahomed}

\address[FM Fazal]{Centre for Differential Equations, Continuum Mechanics and Applications, School of Computational and Applied Mathematics,
University of the
Witwatersrand,
Private Bag 3, Wits 2050,
South Africa}

\email{Fazal.Mahomed@wits.ac.za}

\begin{abstract}
An expression for the coefficients of a linear iterative equation in terms of the parameters of the source equation is given both for equations in standard form and for equations in reduced normal  form. The operator generating an iterative equation of a general order in reduced normal form is also obtained and some other properties of iterative equations are established. In particular, a simple necessary and sufficient condition for an equation to be iterative is given for the general fourth-order linear equation solely in terms of its coefficients.
\end{abstract}

\keywords{Linear iterative equation, Recurrence relations, Coefficients characterization, Normal form}

\subjclass[2010]{35A24, 65Q30, 65F10}

\maketitle

\section{Introduction}
\label{s:intro}
It is well-known \cite{KMichel1} that linear ordinary differential equations ({\sc lode}s) of order one or two can all be reduced by a local diffeomorphism  of the $(x,y)$-plane to the canonical form $y'=0$ and $y''=0$, respectively, and that this is not the case for equations of a general order $n>2.$  Lie \cite{liep213} showed that a differential equation  of a general order $n>2$ is equivalent (by a local diffeomorphism of the plane) to the  equation $y^{(n)}=0,$ which we shall henceforth refer to as the canonical form of the linear equation, only if its symmetry algebra has the maximal dimension $n+4$. In a much recent paper, Krausse and Michel \cite{KMichel2} proved the converse of this statement and also showed that a {\sc lode} of order $n >2$ has a symmetry algebra of maximal dimension if and only if it is iterative.  Linear iterative equations are the iterations $\Psi^n y=0$ of a linear first order equation,  of the form

\begin{equation}\label{eq:iter1}
\Psi y\equiv r(x)y'+ s(x) y=0, \qquad \Psi^n y = \Psi^{(n-1)} \Psi y.
\end{equation}
In these iterations, the equation $r(x)y'+ s(x)y=0$  is termed the source equation, while the functions $r(x)$ and $s(x)$ are referred to as its parameters. Despite the unique symmetry properties of these iterative equations, many of their properties are still not known, and Mahomed \cite{mahom1} gave a listing of these equations for the orders three to five. We extend this list to equations of a general order, in both the standard form and the associated reduced normal form, and determine the operator generating the linear iterative equation of any given order in reduced normal form. Other properties of iterative equations are also established and in particular a simple criterion for a fourth-order linear equation to be iterative is given solely in terms of its coefficients.

\section{Equations in the general linear form}
By replacing the dependent variable $y=y(x)$ by $y + y_p,$ where $y_p$ is a particular solution of the inhomogeneous equation, we may assume without loss of generality that a linear iterative equation of a general order $n$ has the form
\begin{equation}
\label{eq:geniter}
\Psi^n y \equiv K_n^0\, y^{(n)} + K_{n}^1\, y^{(n-1)} +  K_{n}^2\, y^{(n-2)} + \dots +  K_{n}^{n-1}\, y' +  K_{n}^n\, y =0.
\end{equation}
%
\subsection{Case where $\Psi= \frac{d}{dx} + s$}
For the sake of clarity we first consider the case where the differential operator $\Psi= r d/ dx + s$ is much simpler, with $r\equiv r(x)=1.$ It is clear that in this case, the operator $\Psi$ leaves invariant the leading coefficient, and thus we have $K_n^0=1$ in this case. Using the formula for $\Psi^n y$ in \eqref{eq:iter1} gives the recurrence relations
\begin{subequations}\label{eq:rec4r=1}
\begin{align}
K_n^1 &= K_{n-1}^1 + s. \\
K_n^j &=  K_{n-1}^j + \frac{d}{dx} \left( K_{n-1}^{j-1}\right) + s K_{n-1}^{j-1}= K_{n-1}^j+ \Psi K_{n-1}^{j-1},\\
\intertext{ $\qquad \qquad \qquad \quad$ (for $j=2, \dots, n-1$ and $n >2$).}
K_n^n &= \frac{d}{dx} \left( K_{n-1}^{n-1}\right) + s K_{n-1}^{n-1}=\Psi K_{n-1}^{n-1}.
\end{align}
\end{subequations}
Setting
\begin{equation}\label{eq:knj=0}
K_m^j=0, \quad \text{for $j<0$ or $j > m$}, \text{ and }  K_m^j=1, \quad \text{for $m=j=0$}
\end{equation}
reduces the recurrence equations \eqref{eq:rec4r=1} to the single equation
\begin{equation} \label{eq:rec4r=1v2}
K_n^j = K_{n-1}^j + \Psi K_{n-1}^{j-1}, \qquad 0 \leq j \leq n, \quad \forall n \geq1.
\end{equation}
Solving the recurrence relations \eqref{eq:rec4r=1v2} together with the initial conditions

\begin{align*}
\Psi y   &\equiv y' + s y = y' + K_1^1 y  \\
\Psi^2 y &\equiv y'' + 2 s y' + (s^2+ s')y = y''+ K_2^1 y' + K_2^2 y
\end{align*}
readily gives
\begin{align*}
K_n^1 &= n s = \binom{n}{1} \Psi^0 s \\
K_n^n &= \Psi^{n-1} s,
\end{align*}

for all $n \geq 1.$ Using these two equalities, and setting
\begin{equation} \label{eq:psim1}
\Psi^{-1} f=1, \qquad \text{for every function $f=f(x)$},
\end{equation}
one readily sees by induction on $j$ and $n$ that
$$K_n^j = \binom{n}{j} \Psi^{j-1} s, \text{ for $j=0, \dots, n$ and $n \geq 1$}.$$

We have thus obtained the following result for the case where $r=1$.
\begin{thm}\label{th:casr=1}
When the operator $\Psi$ generating the iterative equation has the form $\Psi= d/dx + s$, that is, when it depends on the single function $s,$ the coefficients $K_n^j$  of the iterative equation of a general order $n$ are given by
\begin{equation} \label{eq:knj-r1}
K_n^j = \binom{n}{j} \Psi ^{j-1}s, \qquad \forall j=1, \dots, n,
\end{equation}
and the iterative equation of a general order $n$ is therefore given by
\begin{equation}
\Psi^n y = y^{(n)}+ \sum_{j=1}^n \left[ \binom{n}{j} \Psi^{j-1} s \right] y^{(n-j)}.
\end{equation}
\end{thm}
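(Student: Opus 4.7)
The plan is to prove the formula by induction on the order $n$, applied directly to the unified recurrence \eqref{eq:rec4r=1v2}. It will be convenient to include the leading coefficient in the statement by writing $K_n^j = \binom{n}{j}\Psi^{j-1}s$ for all $0\le j\le n$ and $n\ge 1$; under the convention \eqref{eq:psim1}, the case $j=0$ collapses to $K_n^0=1$, which is already known.

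First I would dispatch the base case $n=1$: the conventions give $K_1^0=1=\binom{1}{0}\Psi^{-1}s$, while the defining relation $\Psi y=y'+sy$ yields $K_1^1=s=\binom{1}{1}\Psi^0 s$. For the inductive step, substituting the hypothesis at level $n-1$ into \eqref{eq:rec4r=1v2} gives
\begin{align*}
K_n^j &= K_{n-1}^j + \Psi K_{n-1}^{j-1} \\
      &= \binom{n-1}{j}\Psi^{j-1}s + \Psi\!\left(\binom{n-1}{j-1}\Psi^{j-2}s\right) \\
      &= \left[\binom{n-1}{j}+\binom{n-1}{j-1}\right]\Psi^{j-1}s \\
      &= \binom{n}{j}\Psi^{j-1}s,
\end{align*}
where the last equality is Pascal's rule. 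The iterative formula $\Psi^n y = y^{(n)}+\sum_{j=1}^n K_n^j\, y^{(n-j)}$ then follows by inserting the closed-form expression for $K_n^j$ into \eqref{eq:geniter}.

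The only subtlety is the boundary behavior. For $j=n$ the term $K_{n-1}^n$ vanishes by \eqref{eq:knj=0}, consistently with $\binom{n-1}{n}=0$; for $j=0$ the term $\Psi K_{n-1}^{-1}$ vanishes, consistently with $\binom{n-1}{-1}=0$; and the borderline case $j=1$ is precisely where the convention $\Psi^{-1}s=1$ earns its keep, since then $\Psi K_{n-1}^0=\Psi(1)=s=\Psi\Psi^{-1}s$. Thus no case-by-case analysis is needed.

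I expect the argument to be almost entirely mechanical; the step that might be mistaken for an obstacle is the non-commutativity of $d/dx$ and multiplication by $s$, which a priori could spoil a Pascal-type identity, but the recurrence \eqref{eq:rec4r=1v2} has been arranged so that the whole operator $\Psi$, rather than $d/dx$ alone, acts on $K_{n-1}^{j-1}$, and it is this that allows the iterate $\Psi^{j-1}s$ to propagate cleanly. Once this is noticed, Pascal's rule does the rest.
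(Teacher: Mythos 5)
Your proof is correct and follows essentially the same route as the paper: the paper also establishes \eqref{eq:knj-r1} by induction from the unified recurrence \eqref{eq:rec4r=1v2} together with the convention $\Psi^{-1}f=1$, merely organizing it as an induction ``on $j$ and $n$'' after first noting the boundary values $K_n^1=ns$ and $K_n^n=\Psi^{n-1}s$. Your single induction on $n$ with Pascal's rule, and your explicit check of the $j=1$ borderline where the convention $\Psi^{-1}s=1$ is used, is a cleaner write-up of the same argument.
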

Note however that \eqref{eq:knj-r1} is also valid for $j=0.$
\subsection{Case where the source equation depends on both parameters}

In this case we have $\Psi= r d/dx +s,$ where the parameters $r,$ and $s$  are given functions,
and this is the most general case. Using the definition for $\Psi^n \,y$ given in \eqref{eq:geniter}
as well as the conventions for $K_n^j$ set in \eqref{eq:knj=0} show that the coefficients $K_n^j$ of
the iterative equation \eqref{eq:geniter} satisfy the recurrence relations

\begin{equation} \label{eq:recgen}
K_n^j= r K_{n-1}^{j} + \Psi K_{n-1}^{j-1}, \qquad \text{for $0 \leq j \leq n,$ and $n \geq 1$},
\end{equation}
which naturally reduce to \eqref{eq:rec4r=1v2} for $r=1.$  Setting $j=0$ or $j=n$ in \eqref{eq:recgen} readily gives by induction on $n$ the identities
\begin{equation}\label{eq:kn0knn}
K_{n}^0= r^n, \qquad K_n^n = \Psi^{n-1} s, \qquad \text{for all $n \geq 1$}.
\end{equation}
Applying  \eqref{eq:recgen} recursively and using the conventions set in \eqref{eq:knj=0} give a new recurrence relation
\begin{equation} \label{eq:recknj1}
K_n^j = \sum_{k=j}^n r^{n-k} \Psi K_{k-1}^{j-1}, \quad \text{for $j=0,\dots, n$ and $n \geq1.$}
\end{equation}

Although Eq. \eqref{eq:recknj1} does not provide the required expression for $K_n^j$ in terms
of the parameters $r$ and $s,$ it represents an algorithm for the computation of
the coefficients $K_n^j$ for all possible values of $n$ and $j$. For instance, using \eqref{eq:recknj1}
with $j = 1, 2$ readily gives

\begin{subequations}\label{eq:kn12}
\begin{align}
K_{n}^1 &= r^{n-1} \left[ n s + \binom{n}{2} r' \right] \label{eq:kn12a}  \\
K_n^2  &=  r^{n-2} \left[ \binom{n}{2} \Psi s + \binom{n}{3} \left( 3 s r' + r r''
    + \frac{3n-5}{4} r'^2 \right) \right]. \label{eq:kn12b}
\end{align}
\end{subequations}
%
To obtain the general expression for $K_n^j$ using a recurrence relation relating them, we rewrite \eqref{eq:recknj1} in the form
\begin{equation} \label{eq:recknj2}
K_n^j= \sum_{k=j}^n r^{(k-j)} \Psi K_{n-k+j-1}^{j-1}.
\end{equation}
Then, using \eqref{eq:knj=0} and \eqref{eq:kn0knn}, the following formulas are successively obtained:

\begin{subequations}
\begin{align}
K_n^1 &= \sum_{k=1}^n r^{k-1} \Psi r^{n-k}\label{eq:kn1gnl}\\
K_n^2 &= \sum_{k_2=2}^n \sum_{k_1=1}^{n-k_2+1} r^{k_2-2}\Psi \left[  r^{k_1-1} \Psi r^{n+1- (k_1+ k_2)}\right]\label{eq:kn2gnl}\\
\begin{split}
K_n^3 &=  \sum_{k_3=3}^{n}\;  \sum_{k_2=2}^{n- k_3+2} \; \sum_{k_1=1}^{\; n- (k_2+ k_3)+3} \\
       & \qquad  r^{k_3-3} \Psi\left[ r^{k_2-2} \Psi \left[ r^{k_1-1} \Psi r ^{n+3- (k_1+k_2+k_3)} \right]\right].
\end{split}
\end{align}
\end{subequations}
Continuing this process with two more iterations by computing $K_n^4$ and $K_n^5,$ a clear pattern for the general coefficient $K_n^j$ emerges, and to write down this expression we introduce some notations. For $n \geq 1$ and $0\leq i\leq j \leq n,$ set
\begin{subequations}\label{eq:betaij}
\begin{align}
\beta_{ij}&= \sum_{u=i+1}^j k_u, \qquad \text{ where $k_u \in \Z$ }\\
M_i &\equiv M_i(j) = n+ \binom{j}{2} - \binom{i}{2}- \beta_{ij}\\
\alpha_j &= n+ \binom{j}{2} - \beta_{0j}=M_0.
\end{align}
\end{subequations}
We have the following result for the general case, where $\Psi= r d/dx +s.$
\begin{thm}\label{th:knj}
In terms of the parameters $r$ and $s$ of the source equation, the general coefficient $K_n^j$ of the iterative equation \eqref{eq:geniter} has the form
\begin{equation} \label{eq:knjgnl}
\begin{split}
K_n^j &= \sum_{k_j=j}^{M_j}\; \sum_{k_{j-1}=j-1}^{M_{j-1}} \dots \sum_{k_2=2}^{M_2} \; \sum_{k_1=1}^{M_1}\\
 & r^{k_j-j} \Psi \left[ r^{k_{j-1}-{(j-1)}} \Psi \left[\dots \Psi \left[r^{k_1-1} \Psi r ^{\alpha_j}\right] \dots \right] \right],
\end{split}
\end{equation}
for $n\geq 1,$ and $1\leq j \leq n,$ and where the expressions for $\beta_{ij}, M_i,$ and $\alpha_j$ are given by \eqref{eq:betaij}.
\end{thm}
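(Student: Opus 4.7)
The plan is to prove Theorem~\ref{th:knj} by induction on $j$, using the recurrence \eqref{eq:recknj2} as the engine and the boundary identity $K_m^0=r^m$ from \eqref{eq:kn0knn} as the anchor. The idea is that a single application of \eqref{eq:recknj2} peels off the outermost sum of \eqref{eq:knjgnl} and reduces the claim to the $(j-1)$-st case evaluated at a shifted value of~$n$.

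For the base case $j=1$, inserting $K_{n-k}^0=r^{n-k}$ into \eqref{eq:recknj2} immediately recovers \eqref{eq:kn1gnl}, which matches \eqref{eq:knjgnl} at $j=1$ since in this case $M_1=n$ and $\alpha_1=n-k_1$. For the inductive step, I would assume \eqref{eq:knjgnl} holds with $j$ replaced by $j-1$ for every admissible value of the top-level integer, rename the summation variable in \eqref{eq:recknj2} as $k_j$, and substitute the inductive hypothesis for $K_{n'}^{j-1}$ with $n':=n-k_j+j-1$. This produces a $j$-fold nested sum with inner indices $k_1,\dots,k_{j-1}$ ranging up to $M_i(j-1,n')$ and innermost exponent $\alpha_{j-1}(n')$. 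What remains is to check that these quantities coincide with $M_i(j,n)$ and $\alpha_j(n)$, and that the outermost bound $M_j(j,n)=n$ matches the range of $k_j$ in \eqref{eq:recknj2}.

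The main, and essentially only, obstacle is this index bookkeeping; it hinges on the elementary identity $\binom{j}{2}-\binom{j-1}{2}=j-1$. Noting that $n-n'=k_j-j+1$ and that $\beta_{ij}-\beta_{i,j-1}=k_j$ for every $0\le i\le j-1$, one computes
\[
M_i(j,n)-M_i(j-1,n')=(k_j-j+1)+(j-1)-k_j=0,
\]
and identically $\alpha_j(n)=\alpha_{j-1}(n')$. The outermost upper bound $M_j(j,n)=n+\binom{j}{2}-\binom{j}{2}-0=n$ is immediate from the definitions in \eqref{eq:betaij}. These three identifications substitute the inductive formula verbatim into the prefactor $r^{k_j-j}\Psi[\,\cdot\,]$, yielding exactly \eqref{eq:knjgnl} and completing the induction.
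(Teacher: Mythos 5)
Your proposal is correct, and it is worth noting that it supplies something the paper does not actually contain: the paper never proves Theorem~\ref{th:knj}. It computes $K_n^1$, $K_n^2$, $K_n^3$ explicitly from \eqref{eq:recknj2}, reports that $K_n^4$ and $K_n^5$ were also computed, and then asserts that ``a clear pattern emerges''; the only verification offered afterwards is Proposition~\ref{pro:r=1}, which checks consistency in the special case $r=1$. Your induction on $j$ turns that pattern-spotting into an actual argument, and the bookkeeping is right: $M_j(j,n)=n$ because $\beta_{jj}$ is empty, so the outer sum of \eqref{eq:knjgnl} matches the range $k_j=j,\dots,n$ in \eqref{eq:recknj2}; and with $n'=n-k_j+j-1$ the identities $n-n'=k_j-j+1$, $\binom{j}{2}-\binom{j-1}{2}=j-1$ and $\beta_{ij}-\beta_{i,j-1}=k_j$ do give $M_i(j,n)=M_i(j-1,n')$ for $0\le i\le j-1$ and hence $\alpha_j(n)=\alpha_{j-1}(n')$, so the inductive expression for $K_{n'}^{j-1}$ drops verbatim into $r^{k_j-j}\Psi[\,\cdot\,]$. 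Two small points you should make explicit in a final write-up: (i) the induction is on $j$ with the statement quantified over all $n\ge j$, since the step invokes the hypothesis at the shifted orders $n'=j-1,\dots,n-1$ rather than at $n$ itself; and (ii) pulling the $(j-1)$-fold nested sum out through the operator $\Psi$ uses the linearity of $\Psi$, the same fact the paper invokes in the proof of Proposition~\ref{pro:r=1}. With those remarks added, your argument is complete and, in my view, strengthens the paper.
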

For the sake of clarity it would be useful to verify explicitly that \eqref{eq:knjgnl} reduces indeed to \eqref{eq:knj-r1} for $r=1$.
\begin{pro}\label{pro:r=1}
Eq. \eqref{eq:knjgnl} reduces as expected to \eqref{eq:knj-r1} for $r=1$.
\end{pro}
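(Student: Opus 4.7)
The plan is to decouple the proposition into two independent claims: when $r=1$, (a) every summand in \eqref{eq:knjgnl} reduces to the same constant value $\Psi^{j-1}s$, independent of the indices $k_1,\dots,k_j$, and (b) the number of summands equals $\binom{n}{j}$. The product of these two factors is then exactly \eqref{eq:knj-r1}.

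For (a), I would substitute $r=1$ directly into the nested bracketed expression. Every factor $r^{k_u-u}$ and the innermost $r^{\alpha_j}$ becomes $1$, so the whole expression collapses to the $j$-fold iterate $\Psi^{j}(1)$. Since $\Psi\,1 = 0\cdot\! 1' + s\cdot 1 = s$, this equals $\Psi^{j-1}s$, and it is plainly independent of $k_1,\dots,k_j$.

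For (b), I would introduce the change of variables $l_u = k_u - u$, $u=1,\dots,j$, so that the lower bounds $k_u \geq u$ from \eqref{eq:knjgnl} become $l_u \geq 0$. Using the elementary identity $\sum_{u=i}^j u = \binom{j+1}{2} - \binom{i}{2}$, I would rewrite $\beta_{ij}$ in terms of the $l_u$, and then simplify the inequality $k_i \leq M_i$; the expected outcome is that it reduces to the clean form
\begin{equation*}
\sum_{u=i}^{j} l_u \ \leq\ n-j,\qquad i=1,\dots,j.
\end{equation*}
Because all $l_u$ are non-negative, the $i=1$ inequality $l_1 + \cdots + l_j \leq n-j$ implies all the others, so it is the only binding constraint. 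A standard stars-and-bars count of non-negative integer solutions (equivalently, $l_0 + l_1 + \cdots + l_j = n-j$ after adjoining a slack variable) then gives $\binom{n}{j}$.

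The only delicate step is the bookkeeping that recasts $k_i \leq M_i$ as $\sum_{u=i}^{j} l_u \leq n-j$; once this is done, monotonicity in $i$ kills all but the $i=1$ constraint, and the counting is immediate. Combining (a) and (b) produces $K_n^j = \binom{n}{j}\,\Psi^{j-1}s$, as required.
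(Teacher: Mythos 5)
Your proposal is correct, and part (a) coincides with the paper's first step: with $r=1$ every nested bracket collapses to $\Psi^{j}\,1=\Psi^{j-1}s$ (note only that $\Psi 1 = 1\cdot 1' + s\cdot 1 = s$; your ``$0\cdot 1'$'' should read ``$1\cdot 1'$'', though the value is $0$ either way). Where you genuinely diverge is in part (b). The paper counts the terms $P_{n,j}$ by induction on $n$: it invokes the recurrence \eqref{eq:recknj2} to get $P_{n,j}=\sum_{k=j}^{n}\binom{n-k+j-1}{j-1}$ and closes with the hockey-stick identity $\sum_{q=m}^{n-1}\binom{q}{m}=\binom{n}{m+1}$. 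You instead count the lattice points of the nested summation directly: substituting $l_u=k_u-u$ and using $\sum_{u=i+1}^{j}u=\binom{j+1}{2}-\binom{i+1}{2}$, the bound $k_i\le M_i$ does reduce exactly to $\sum_{u=i}^{j}l_u\le n-j$ (the binomial corrections cancel as $\binom{j}{2}-\binom{j+1}{2}=-j$ and $\binom{i+1}{2}-\binom{i}{2}=i$), the $i=1$ constraint dominates by non-negativity, and stars-and-bars gives $\binom{n}{j}$. Both arguments are valid; yours is more self-contained, since it works only with the stated bounds \eqref{eq:betaij} of the closed formula and does not presuppose that the multiple sum \eqref{eq:knjgnl} satisfies the recurrence \eqref{eq:recknj2}, whereas the paper's induction leans on that recurrence (which is also how the formula was derived). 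The paper's route is shorter on the page but logically circular-adjacent in that respect; your route doubles as an independent sanity check of the summation limits, and indeed anticipates the reindexing \eqref{eq:kj2pj}--\eqref{eq:simplknj} the authors perform immediately afterwards.
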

\begin{proof}
When $r=1,$ the general term in the summation \eqref{eq:knjgnl} clearly reduces to $\Psi^{j} \cdot 1= \Psi^{j-1} s,$ and since this expression does not depend on the running indices $k_1, k_2, \dots k_j$ to prove the proposition, it suffices to show that the total number
\begin{equation} \label{eq:pnj}
P_{n,j}= \sum_{k_j=j}^{M_j}\; \sum_{k_{j-1}=j-1}^{M_{j-1}} \dots \sum_{k_2=2}^{M_2} \; \sum_{k_1=1}^{M_1} 1
\end{equation}
of terms in this summation is precisely $\binom{n}{j}.$ For $j=1,$ and $j=2,$ it clearly follows from \eqref{eq:kn1gnl} and \eqref{eq:kn2gnl} respectively that $P_{n,1}= \binom{n}{1},$ and $P_{n,2}= \binom{n}{2}.$ It also follows from \eqref{eq:kn0knn} that $P_{n,0}$ and $P_{n,n}$ also satisfy the required property for $n\geq 1.$ Let $n\geq 2$ and assume that $P_{v,j-1}=\binom{v}{j-1},$ for $1\leq v<n$ and $0 \leq j-1 \leq v <n.$ Then it follows from \eqref{eq:recknj2} and the linearity of $\Psi$ that $P_{n,j}= \sum_{k=j}^{n} \binom{n-k+j-1}{j-1}.$ Setting $n-k+j-1=q$ and $j-1=m$ gives
$$P_{n,j}= \sum_{q=m}^{n-1} \binom{q}{m}= \binom{(n-1)+1}{m+1}= \binom{n}{j},$$
and this completes the proof by induction of the required property for $P_{n,j}.$
\end{proof}

  Although Proposition \ref{pro:r=1} gives a verification of the  validity of the complicated formulas \eqref{eq:knjgnl} at least for the simpler case $r=1,$ these formulas can be slightly simplified, by a suitable change of variables. Indeed, if in \eqref{eq:knjgnl} we set
\begin{equation}\label{eq:kj2pj}
\begin{cases}
k_j-j = n- P_j, \quad &\text{for}\quad  j\geq 1 \\
k_i-i= P_{i+1}-P_i -1, \quad &\text{for} \quad i=1,\dots , j-1
\end{cases}
\end{equation}
This reduces the expression for $M_i$ in \eqref{eq:betaij} to
\begin{align}\label{eq:newMi}
M_i&= P_{i+1}-1
\end{align}
In particular $\alpha_j= M_o$ is reduced to
\begin{align}\label{eq:newalphaj}
\alpha_j &= P_1-1
\end{align}
Since $M_j=n$ for all $j,$ thanks to \eqref{eq:newMi}, we may rewrite \eqref{eq:kj2pj} as
\begin{align}\label{eq:gnlkj2pj}
k_i-i &= M_i -P_i, \quad \text{for} \quad i=1,\dots,j.
\end{align}
Consequently, thanks to \eqref{eq:newMi}, \eqref{eq:newalphaj} and \eqref{eq:gnlkj2pj}, and after  renaming  the $P_i,$ we may rewrite \eqref{eq:knjgnl} in the slightly simplified form
\begin{equation} \label{eq:simplknj}
\begin{split}
K_n^j &= \sum_{k_j=j}^{n}\; \sum_{k_{j-1}=j-1}^{k_j-1} \dots \sum_{k_2=2}^{k_3-1} \; \sum_{k_1=1}^{k_2-1}\\
 & r^{n-k_j} \Psi \left[ r^{k_j-k_{j-1}-1}\Psi \left[\dots r^{k_3-k_2-1} \Psi \left[r^{k_2-k_1-1} \Psi r ^{k_1-1}\right] \dots \right] \right],
\end{split}
\end{equation}

\section{Equations in reduced normal form}

By diving through the general $n$-th order linear iterative equation $\Psi^n y $ in \eqref{eq:geniter} by $K_n^0= r^n$, it can be put in the form
\begin{equation} \label{eq:iterstd}
y^{(n)} + B_n^1\, y^{(n-1)}+ \dots + B_n^j\, y^{(n-j)} + \dots + B_n^n\, y=0,
\end{equation}
where $B_n^j= K_n^j /r^n.$ This is the standard form of the general linear iterative equation with leading coefficient one.  It is well-known that \eqref{eq:iterstd} can be transformed to the normal form (in which coefficient of $y^{(n-1)}$ has vanished) by a change of the dependent variable of the form
\begin{equation}\label{eq:std2nor}
y \mapsto y \exp\left(\frac{1}{n} \int_{x_0}^x B_n^1 (v)\, dv \right).
\end{equation}

However,  this amounts to the requirement that $B_n^1=0,$ i.e. that $K_n^1=0.$  Therefore, an $n$-th order linear equation in reduced normal form is iterative if and only if it has the form

\begin{subequations} \label{eq:iternor}
\begin{align}
&y^{(n)} + A_n^2\, y^{(n-2)} + \dots + A_n^j\, y^{(n-j)} + \dots A_n^n\, y=0  \label{eq:iternor1}\\
\intertext{where}\vspace{-10mm}
& A_n^j= \frac{K_n^j}{r^n}\Bigg \vert_{K_n^1=0} , \qquad (2 \leq j \leq n), \label{eq:iternor2}
\end{align}
\end{subequations}
and where $K_n^j$ is given by \eqref{eq:knjgnl}. It follows from \eqref{eq:kn12a} that setting $K_n^1=0$ amounts to setting
\begin{equation} \label{eq:s(x)=}
s = - \frac{1}{2} (n-1) r',
\end{equation}

and this shows why any iterative equation in normal form can be expressed
in terms of the parameter $r$ alone. Moreover, thanks to \eqref{eq:iternor2} the coefficients
 $A_n^j$ inherit all of the characterization \eqref{eq:knjgnl} obtained for the coefficients $K_n^j$ of
iterative equations in standard form, and the corresponding expression for
the $A_n^j$ has up to the factor $1/r^n,$ exactly the same form as that for the  $K_n^j$
because the parameter $s$ does not appear explicitly in Eq. \eqref{eq:knjgnl}. In addition,
using the same algorithm (e.g. \eqref{eq:recknj1}) obtained for the $K_n^j,$ one can readily
compute $A_n^j$ for all $n \geq 2$ and $2 \leq j \leq n.$ For instance, using the expression
for $K_n^2$ in \eqref{eq:kn12b} together with \eqref{eq:iternor2}, one readily sees that

\begin{equation} \label{eq:An2}
A_n^2= \binom{n+1}{3} A(r), \text{ where } A(r)\equiv A_2^2= \frac{r'^{\,2}- 2 r r''}{4 r^2}.
\end{equation}
\begin{thm}\label{th:Phi_n}
Let the differential operator $\Phi_n$ be given by
\begin{equation} \label{eq:Phi_n}
\Phi_n = \frac{1}{r^n} \Psi^n \Bigg \vert_{K_n^1 =0}.
\end{equation}
Then the equation $\Phi_n y=0$ is exactly \eqref{eq:iternor1}, that is $\Phi_n$ generates the (most general) linear iterative equation of an arbitrary order $n$ in normal form.
\end{thm}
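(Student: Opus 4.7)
The theorem packages together two assertions: first, that the operator $\Phi_n$ as defined produces exactly the equation \eqref{eq:iternor1}, and second, that as $r$ varies, $\Phi_n y=0$ exhausts all order-$n$ linear iterative equations in reduced normal form. Both are essentially direct consequences of material already established in the excerpt, so my plan is to present the proof as a short, structured verification.

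For the first assertion, I would start from the expansion of $\Psi^n y$ given in \eqref{eq:geniter}, invoke the identity $K_n^0 = r^n$ from \eqref{eq:kn0knn} to obtain the preliminary expression $\tfrac{1}{r^n}\Psi^n y = y^{(n)} + \sum_{j=1}^n (K_n^j/r^n)\, y^{(n-j)}$, and then impose the constraint $K_n^1 = 0$. The $j=1$ term disappears, and by the definition in \eqref{eq:iternor2} the remaining coefficients are exactly the $A_n^j$ for $2 \le j \le n$. Hence $\Phi_n y$ coincides term by term with the left-hand side of \eqref{eq:iternor1}.

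For the generality assertion, the key observation is that by \eqref{eq:kn12a} the constraint $K_n^1 = 0$ is equivalent to $s = -\tfrac{1}{2}(n-1)r'$, i.e.\ \eqref{eq:s(x)=}, so $\Phi_n$ is in fact parameterized by the single function $r$. To see that no iterative equation in normal form is missed, I would start from an arbitrary iterative equation $\tilde\Psi^n \tilde y = 0$ with $\tilde\Psi = \tilde r\, d/dx + \tilde s$ (no a priori constraint on $\tilde s$) and apply the change of dependent variable $\tilde y = h y$. A direct computation shows $\tilde\Psi(hy) = h\, \Psi y$ with $\Psi = \tilde r\, d/dx + (\tilde s + \tilde r\, h'/h)$, and iterating gives $\tilde\Psi^n(hy) = h\, \Psi^n y$, so the iterative structure is preserved under this conjugation. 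Choosing $h$ to satisfy $h'/h = -[\tilde s + \tfrac{1}{2}(n-1)\tilde r']/\tilde r$ forces the new source to meet the constraint, and the resulting equation is of exactly the shape $\Phi_n y = 0$. Alternatively one can simply appeal to the characterization stated right before the theorem: an order-$n$ equation in reduced normal form is iterative iff it has the form \eqref{eq:iternor1}, which is precisely what $\Phi_n$ generates.

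No step is a serious obstacle; the theorem is close to tautological given the preceding development. The only point that warrants explicit care is the generality claim, for which one should verify that the passage to reduced normal form can be implemented at the level of the source operator while preserving iterativity.
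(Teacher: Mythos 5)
Your proof is correct. Its first half is essentially identical to the paper's own three-line argument: $\Psi^n y$ gives the iterative equation in standard form, dividing by $K_n^0=r^n$ (from \eqref{eq:kn0knn}) normalizes the leading coefficient, and imposing $K_n^1=0$ removes the $y^{(n-1)}$ term so that the surviving coefficients are exactly the $A_n^j$ of \eqref{eq:iternor2}. Where you genuinely depart from the paper is in the generality claim: the paper dismisses it with ``as already noted,'' leaning on the discussion around \eqref{eq:std2nor}--\eqref{eq:iternor}, which tacitly assumes that the reduction to normal form carries an iterative equation to another iterative equation whose source satisfies $K_n^1=0$. Your conjugation identity $\tilde\Psi(hy)=h\,\Psi y$ with $\Psi=\tilde r\,d/dx+(\tilde s+\tilde r\,h'/h)$, iterated to $\tilde\Psi^n(hy)=h\,\Psi^n y$, is precisely the lemma that makes this rigorous: it implements the change of dependent variable at the level of the source operator, and the choice $h'/h=-[\tilde s+\tfrac12(n-1)\tilde r']/\tilde r$ produces a new source with $s=-\tfrac12(n-1)r'$, i.e.\ \eqref{eq:s(x)=}, hence $K_n^1=0$ by \eqref{eq:kn12a}; one can also check that this $h$ agrees with the exponential factor in \eqref{eq:std2nor}, since $\tfrac1n B_n^1=[s+\tfrac12(n-1)r']/r$. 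So your version buys a self-contained proof of surjectivity (every iterative equation in reduced normal form arises as $\Phi_n y=0$ for some $r$), at the cost of a short computation; the paper's version buys brevity by deferring entirely to the earlier normal-form discussion.
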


\begin{proof}
Indeed, $\Psi^n y$ generates the linear iterative equation of general order $n$ in standard form and $(1/r^n) \Psi^n$ generates the same equation with leading coefficient 1, while setting $K_n^1=0$ corresponds as already noted to reduce the latter equation to its normal form.
\end{proof}

\section{Applications}

   Although formula \eqref{eq:knjgnl} in Theorem \ref{th:knj} gives a characterization of the coefficients of a linear iterative equation of a general order in terms of the parameters of the source equation, and therefore a characterization of the linear iterative equation itself, in practice a linear equation is given solely in terms of its coefficients, and without reference to any source equation or the parameters thereof. We thus need a characterization of these iterative equations that relies solely on the coefficients of the equation. Using the operator $\Phi_n$ of Theorem \ref{th:Phi_n} we can easily generate an iterative equation of a general order $n,$ and we let the generated equation be in the form \eqref{eq:iternor1}. By analyzing the coefficients of the latter equation we readily see, at least for low order equations, that they all depend only on the coefficient $A_n^2$ and its derivatives. For instance, for $n=3$ or $4,$ if we set $A_3^2= a_3$ and $A_4^2= a_4,$ then iterative equations of orders $3$ and $4$ take on respectively the forms

\begin{align}
&y''' + a_3 y' + \frac{1}{2} a_3' y =0  \label{eq:itero3}\\
&y^{(4)} + a_4 y'' + a_4' y' + \left(\frac{3}{10} a_4'' + \frac{9}{100}a_4^2\right)y =0 \label{eq:itero4}
\end{align}
Note that conversely, any equation of the form \eqref{eq:itero3} or \eqref{eq:itero4} is iterative, because we can always solve for $r$ the equation $A_n^2= \binom{n+1}{3}A(r)$ appearing in \eqref{eq:An2}, together with \eqref{eq:s(x)=} to find the parameters of the corresponding source equations. Thus each of the equations \eqref{eq:itero3} and \eqref{eq:itero4} characterizes the normal form of the iterative equation of the corresponding order. However, the most general characterization is to be given for equations in standard form \eqref{eq:iterstd}, and the most practical characterization should be expressed explicitly in terms of the coefficients of the equation.\par

  Let a third-order linear {\sc lode} be given in the form

\begin{equation}\label{eq:3odestd}
y'''+ c_2 y'' + c_1 y' + c_0 y=0,
\end{equation}

where the coefficients $c_j$ for $j=0,1,2$ are all functions of $x.$ We may thus assume that its reduced normal form is given by \eqref{eq:itero3}. Reverting back the latter reduced equation to the corresponding equation in standard form using the inverse of the transformation of type  \eqref{eq:std2nor} , and dropping the subscripts in the expression of the coefficients in \eqref{eq:itero3} gives
\begin{equation}\label{eq:2std33}
\begin{split}
 &w'''+ c_2 w''  + \left(a+ c_2'+ c_2^2/3\right) w' \\
 &+\frac{1}{54}  \left(27 a'+18 c_2''+18 a c_2+18
c_2' c_2+2 c_2^3\right)w =0,
\end{split}
\end{equation}
where $w$ is the new depend variable. Letting $w=y$ in \eqref{eq:2std33} and then equating its coefficients with those of \eqref{eq:3odestd} shows that

\begin{subequations}\label{eq:coford3}
\begin{align}
a &=  c_1 - (c_2'+ c_2^2/3) \label{eq:coford3a}\\
c_0 &= \frac{1}{54}  \left[27 a'+18 c_2''+18 a c_2+18 c_2' c_2+2 c_2^3 \right] \label{eq:coford3b}.
\end{align}
\end{subequations}
Using \eqref{eq:coford3a} for the expression of $a$ and its derivatives and substituting the result in \eqref{eq:coford3b} gives

\begin{equation}\label{eq:characof3}
54 c_0 - 18 c_1 c_2 + 4 c_2^3-27 c_1' + 18 c_2 c_2'+ 9 c_2'' =0.
\end{equation}
The latter equation is the well-known characterization  due to Laguerre \cite{lag} and Lie \cite{lieTransf} of linear third order equations that can be reduced to the canonical form $y'''=0$ by a point transformation. However, the methods they used to derive this conditions were different from the one used here, and which is based on the characterization of iterative equations given by \eqref{eq:knjgnl} and the generating operator $\Phi_n$ in \eqref{eq:Phi_n}. Clearly, due to the result already cited of Krause and Michel \cite{KMichel2} relating iterative equations and equations reducible to canonical form by point transformations, \eqref{eq:characof3} is also a characterization of linear third order equations which are iterative.

We now move on to consider the case of a fourth order linear equation given in the form
\begin{equation}\label{eq:4odestd}
y^{(4)}+ c_3y''' + c_2 y'' + c_1 y' + c_0 y=0,
\end{equation}
and we are interested in deriving necessary and sufficient conditions on the coefficients $c_j\; (j=0, \dots, 3)$  for the equation to be iterative. Here again, we may assume that the reduced normal form of the equation is given by \eqref{eq:itero4}. Transforming back the latter equation to its standard form, dropping the subscript in $a_4$ and  equating the coefficients of the resulting equation with those of \eqref{eq:4odestd} gives
\begin{subequations} \label{eq:coford4}
\begin{align}
a &= \frac{1}{8} \left(   -3 c_3^2+ 8 c_2 - 12 c_3' \right) \label{eq:coford4a}\\
c_1&=  \frac{1}{2} a c_3 + \frac{1}{16}c_3^3+ a' + \frac{3}{4} c_3 c_3' + c_3'' \label{eq:coford4b}\\
\begin{split}
6400 c_0 &=  576 a^2+400 a \left(4 c_3'+c_3^2\right) +80
\left(15 c_3'^2 +24 a''+20 c_3''' \right)\\
&\quad+1600 \left(a'+c_3'' \right) c_3+ 600 c_3'
c_3^2+25 c_3^4.  \label{eq:coford4c}
\end{split}
\end{align}
\end{subequations}
Substituting in \eqref{eq:coford4b} and \eqref{eq:coford4c} the expressions for $a$ and its derivatives given by \eqref{eq:coford4a} yields
\begin{subequations}\label{eq:characof4}
\begin{align}
0&=4 c_2 c_3 - c_3^3 + 8 c_2' - 6 c_3 c_3' - 4 c_3'' - 8 c_1  \label{eq:characof4a}\\
\comment{Here is where correction takes place, below!!!}
\begin{split}
0&= 1600 c_0 - 144 c_2^2 + 11 c_3^4 -400 c_3 c_2' + 288 c_3^2 c_3'+ 336 c_3^{\,\prime\, 2} \\
&\quad + 8 c_2(c_3^2 + 4 c_3')- 480 c_2'' + 560 c_3c_3'' + 320 c_3'''. \label{eq:characof4b}
\end{split}
\end{align}
\end{subequations}
We thus have the following result.
\begin{thm}
A Linear fourth order equation of the general form \eqref{eq:4odestd} is iterative if and only if its coefficients satisfy the system of two equations \eqref{eq:characof4}.
\end{thm}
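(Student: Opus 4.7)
The plan is to package the chain of equivalences already set up in the preceding discussion into a clean iff statement. The key observation is that the system \eqref{eq:characof4} is exactly what one obtains by eliminating the auxiliary function $a$ from the three scalar relations \eqref{eq:coford4}, while \eqref{eq:coford4} in turn is the coefficient-matching condition that asserts the reduced normal form of \eqref{eq:4odestd} lies in the iterative family \eqref{eq:itero4}.

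For the forward implication, I would start from an iterative equation of the form \eqref{eq:4odestd} and apply the normalization \eqref{eq:std2nor} with $n=4$ and $B_4^1 = c_3$. Under this exponential change of dependent variable, iterativity is preserved: if $\Psi = r\,d/dx + s$ generates the original equation, then conjugation by the multiplication operator $y \mapsto g y$ produces a first-order operator of the same shape $r\,d/dx + \tilde{s}$ whose fourth iterate annihilates the new unknown. Hence the transformed equation is iterative and in reduced normal form, so it must have the shape \eqref{eq:itero4} for some function $a$. Writing \eqref{eq:4odestd} back as the image of \eqref{eq:itero4} under the inverse transformation and equating coefficients yields exactly the system \eqref{eq:coford4}; substituting \eqref{eq:coford4a} together with its first and second derivatives into \eqref{eq:coford4b} and \eqref{eq:coford4c} eliminates $a$ and produces the pair \eqref{eq:characof4}.

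For the converse, I would assume the coefficients of \eqref{eq:4odestd} satisfy \eqref{eq:characof4} and \emph{define} $a$ through \eqref{eq:coford4a}. One then checks that \eqref{eq:characof4a} is precisely what \eqref{eq:coford4b} becomes under this substitution, and similarly that \eqref{eq:characof4b} is what \eqref{eq:coford4c} becomes; all three relations of \eqref{eq:coford4} therefore hold, so \eqref{eq:4odestd} coincides with the standard form of \eqref{eq:itero4} for this choice of $a$. Since every equation of the shape \eqref{eq:itero4} is iterative---one solves $A_4^2 = 10\,A(r)$ from \eqref{eq:An2} for $r$ and then reads off $s$ from \eqref{eq:s(x)=}---and since the change of variable \eqref{eq:std2nor} preserves iterativity, \eqref{eq:4odestd} is iterative as well.

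The main obstacle is arithmetic rather than conceptual: one must verify that the elimination of $a$, $a'$, $a''$ between \eqref{eq:coford4a} and the equations \eqref{eq:coford4b}--\eqref{eq:coford4c} reproduces the exact numerical coefficients, including the factor $1600$ and the specific mixture of cubic and quartic monomials in $c_3$, that appear in \eqref{eq:characof4}. This is a routine but lengthy polynomial manipulation which, given the opportunities for sign and coefficient slips, is best performed symbolically.
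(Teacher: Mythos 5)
Your proposal is correct and follows essentially the same route as the paper: the forward direction is the derivation of \eqref{eq:characof4} from \eqref{eq:coford4} by eliminating $a$, and the converse reverses that elimination to land back in the family \eqref{eq:itero4}, which was already observed to be iterative via \eqref{eq:An2} and \eqref{eq:s(x)=}. The paper phrases the converse by computing the normal-form coefficients $Q_0,Q_1,Q_2$ and checking $Q_1=Q_2'$ and $Q_0=\tfrac{3}{10}Q_2''+\tfrac{9}{100}Q_2^2$, which is the same verification you describe with $a=Q_2$; your explicit remark that the exponential change of variable preserves iterativity (by conjugating $\Psi$ into an operator of the same first-order shape) is a point the paper leaves implicit.
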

\begin{proof}
As the system \eqref{eq:characof4} was obtained under the assumption that \eqref{eq:4odestd} is iterative, we only need to prove conversely that the equation is iterative whenever its coefficients satisfy \eqref{eq:characof4}. The reduced normal form of \eqref{eq:4odestd} has, after the substitution of the expressions for $c_0$ and $c_1$ given by \eqref{eq:characof4} in terms of $c_2, c_3,$ and their derivatives, the form

\begin{subequations}\label{eq:nor4std4}
\begin{align}
w^{(4)} & + Q_2 w'' + Q_1 w' + Q_0 w  =0 \label{eq:nor4std4E} \\
\intertext{where}
Q_2 &= c_2 - \frac{3}{8}(c_3^2 + 4 c_3')\\
Q_1 &= c_2'- \frac{3}{4}(c_3 c_3'+ 2 c_3'')\\
 \begin{split}  Q_0 &= \frac{3}{6400}(192 c_2^2 + 27 c_3^4 - 48 c_3'^2 - 144 c_2 (c_3^2+4 c_3'))\\
&\quad + \frac{3}{6400}( 27 c_3^4+ 216 c_3^2 c_3'+ 640 c_2''- 480 c_3 c_3'' -960 c_3'''). \end{split}
\end{align}
\end{subequations}
The coefficients $Q_j$ thus obtained clearly satisfy the conditions
$$Q_1= Q_2'\quad \text{ and }\quad  Q_0= (\frac{3}{10} Q_2'' + \frac{9}{100} Q_2^2)$$
prescribed by \eqref{eq:itero4}, and this completes the proof of the theorem.
\end{proof}


\section{Concluding remarks}

   Some of the results that we've obtained in this paper include the expression given in \eqref{eq:knjgnl} for  the coefficients of an iterative equation of a general order in standard form \eqref{eq:geniter} in terms of the parameters of the source equation, and the similar expression in \eqref{eq:iternor2} for iterative equations in reduced normal form \eqref{eq:iternor1}.  We have provided a simple algorithm \eqref{eq:recknj1} for the calculation of these coefficients for given parameters of the source equation, and the operator  $\Phi_n$ obtained in Theorem \ref{th:Phi_n} generates the iterative equation of an arbitrary order $n$ in normal form. Given the already cited result of Krause and Michel \cite{KMichel2} according to which an equation is iterative if and only if it is reducible by a point transformation to the canonical form, the characterization of iterative equation we have given in \eqref{eq:characof4} is also an extension to the order four of the characterization \eqref{eq:characof3} due to Laguerre \cite{lag} and Lie \cite{lieTransf} of linear third-order equations that are reducible to the canonical form. But unlike the characterization \eqref{eq:characof3} which consists of a single equation, the characterization \eqref{eq:characof4} for fourth-order equations consists of a system of two  equations, and from the process of their derivation it should be expected that for equations of order $n$ the corresponding characterization will consist of a system of $n-2$ equations.

\bibliographystyle{model1-num-names.bst}

\end{document}